\theoremstyle{plain}
\newtheorem{theorem}{Theorem}
\theoremstyle{definition}
\newtheorem{definition}{Definition}
\theoremstyle{remark}
\newtheorem{remark}{Remark}
\numberwithin{equation}{section} % to get equations numbered
\begin{document}
\title[Geodesic mappings of (pseudo-) Riemannian manifolds]{Geodesic mappings of (pseudo-)
 Riemannian manifolds preserve the class of differentiability} % please provide
                                % an abbreviated title

\author{I. Hinterleitner}
\address{Brno University of Technology \\ Faculty of Civil Engineering \\ Dept.~of Mathe\-ma\-tics\\ \v Zi\v zkova 17\\  602 00 \\ Brno \\ Czech Republic}
%Some University\\ Some Department\\ 12 Curly St.\\ 12345   City\\ Country}

\email{hinterleitner.irena@seznam.cz}

\thanks{The paper was supported by grant P201/11/0356 of The Czech Science Foundation and by the project FAST-S-12-25 of the Brno University of Technology.}

\author{J. Mike\v s}

\address{Palacky University\\Faculty of Science\\ Dept.~of Algebra and Geometry\\
17.~listopadu 12\\ 77146 \\ Olomouc\\ Czech Republic}

\curraddr{KAG PrF UP\\
17. listopadu 12\\ 77146 \\ Olomouc\\ Czech Republic}

\email{josef.mikes@upol.cz}

\begin{abstract}
In this
 paper we prove that geodesic mappings of (pseudo-) Riemannian manifolds preserve the class of differentiability \hbox{$(C^r, r\geq1)$}.
Also, if  the Einstein space $V_n$ admits a non trivial geodesic mapping onto  a \hbox{(pseudo-)} Riemannian manifold $\bar V_n\in C^1$, then $\bar V_n$ is an Einstein space.
 If  a four-dimensional Einstein space with  non constant curvature globally admits a geodesic mapping onto a (pseudo-) Riemannian manifold $\bar V_4\in C^1$, then the mapping is  affine and, moreover, if the scalar curvature is non vanishing, then the mapping is homothetic, i.e. $\bar g={\rm const}\cdot g$.
\end{abstract}

%\dedicatory{This paper is dedicated to Professor X on his 125th birthday.}

\subjclass{53B20, 53B21, 53B30,  53C25}

\keywords{geodesic mappings, (pseudo-) Riemannian manifold, smoothess class, Einstein manifold}

\maketitle

\def\a{\alpha}
\def\b{\beta}
\def\g{\gamma}
\def\G{\Gamma}
\def\d{\delta}
\def\la{\lambda}
\def\e{{\rm \,e\,}}
\def\vn{$V_n$}
\def\vnn{$\bar V_n$}
\def\nad#1#2{\buildrel{#1} \over{#2}\!\!\strut}
\def\pod#1#2{\mathrel{\mathop{#2}\limits_{#1}}\strut}
\def\ds{\displaystyle}
\def\noi{\noindent}

\section{Introduction}

The paper is devoted to the geodesic mapping theory of (pseudo-) Riemannian manifolds with respect to differentiability of their metrics.
Most of the results in this area are formulated for ``sufficiently" smooth, or analytic, geometric objects, as usual in differential geometry.
It can be observed in most of monographs and researches, dedicated to the study of the theory of geodesic mappings and transformations, see \cite{ami1,cizl12,ei1,ei2,formik,hal11,himi10,himi11,himi12,le,mid,miei,mi,miho,mibe89,mihi10,mkv,mistr,mvh,no,pe,pr,rmg,shir,si,sol,stmi10,th,vra,we}.

Let $V_n=(M,g)$ and $\bar V_n=(\bar M,\bar g)$ be  (pseudo-) Riemannian manifolds, where $M$ and $\bar M$ are \hbox{$n$-di}\-men\-sio\-nal manifolds with dimension $n\geq2$, $g$ and $\bar g$ are metrics. All the manifolds are assumed to be connected.
\begin{definition}
A diffeomorphism $f$: $V_n\to \bar V_n$ is called a \textit{geodesic mapping} of~$V_n$ onto $\bar V_n$ if $f$ maps any geodesic  in $V_n$ onto a geodesic in $\bar V_n$.
\end{definition}
Hinterleitner and Mike\v s \cite{himi12} have proved the following theorem:

\begin{theorem}\label{th1}
If  the (pseudo-) Riemannian manifold $V_n$ $(V_n\in C^r$, $r\geq2$, $n\geq2)$ admits a geodesic mapping onto $\bar V_n\in C^2$, then $\bar V_n$ belongs to $C^r$.
\end{theorem}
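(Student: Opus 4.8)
\emph{Proof proposal.} The assertion is local, so I would fix a point of $M$ and a coordinate neighbourhood which, via $f$, can be regarded as common to $V_n$ and $\bar V_n$; there $g\in C^r$, hence $\Gamma^h_{ij}\in C^{r-1}$ and the curvature tensor of $g$ lies in $C^{r-2}$, while a priori only $\bar g\in C^2$. Because $f$ is a geodesic mapping and both connections are metric, in such a chart the Christoffel symbols obey the classical relation of geodesic mappings,
\[
\bar\Gamma^h_{ij}=\Gamma^h_{ij}+\delta^h_i\,\psi_j+\delta^h_j\,\psi_i,\qquad \psi_i=\partial_i\psi,\quad \psi=\tfrac1{2(n+1)}\ln\bigl|\det\bar g/\det g\bigr|,
\]
equivalently $\nabla_k\bar g_{ij}=2\psi_k\bar g_{ij}+\psi_i\bar g_{jk}+\psi_j\bar g_{ik}$, with $\nabla$ the Levi-Civita connection of $g$; a priori $\psi\in C^2$, $\psi_i\in C^1$. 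The case $r=2$ being trivial, I assume $r\ge 3$.

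The heart of the plan is to pass to Sinyukov's linearization. I would set $a_{ij}:=e^{2\psi}\,g_{ip}g_{jq}\bar g^{pq}$, a nondegenerate symmetric tensor, a priori of class $C^2$, and check that the Levi-Civita equations become the \emph{first fundamental equation}
\[
\nabla_k a_{ij}=\lambda_i\,g_{jk}+\lambda_j\,g_{ik},\qquad \lambda_i:=-e^{2\psi}g_{ip}\bar g^{pq}\psi_q\in C^1 .
\]
Two points make this the right move. First, the substitution is algebraically reversible: $e^{2\psi}=\det(g_{rs})/\det(a_{rs})$ and $\bar g_{ij}=\bigl(\det(g_{rs})/\det(a_{rs})\bigr)g_{ip}g_{jq}a^{pq}$, with $(a^{pq})$ the matrix inverse of $(a_{rs})$; so it is enough to prove $a\in C^r$. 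Second, in coordinates the first fundamental equation reads $\partial_k a_{ij}=\Gamma^l_{ki}a_{lj}+\Gamma^l_{kj}a_{il}+\lambda_i g_{jk}+\lambda_j g_{ik}$, whose coefficients involve $g$ and $\Gamma$ alone — no curvature, hence no second derivatives of $g$ — and this is what will eventually produce the last derivative.

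Next I would prolong. Differentiating the first fundamental equation and applying the Ricci identity to $a$ gives a second equation of the shape $\nabla_k\lambda_i=\nu\,g_{ik}+(\text{an expression in }a\text{ and the curvature of }g)$, introducing a scalar $\nu$; differentiating once more and using the Bianchi identities of $g$ gives a third equation $\nabla_k\nu=(\text{an expression in }\lambda,a\text{ and the curvature of }g)$. Then $(a_{ij},\lambda_i,\nu)$ satisfies a closed linear system of Cauchy type $\nabla_k U=\Omega_k(x)\,U$ whose matrix $\Omega$ is algebraically built from $g$, $\Gamma$ and the curvature of $g$ together with, at worst, its first covariant derivative, so $\Omega\in C^{r-3}\subset C^0$. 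Along any curve this is a linear ODE system with continuous coefficients, and a short iteration raises the smoothness of its solutions: the curvature enters the $\nu$- and $\lambda$-equations, so these stabilise at $\nu\in C^{r-2}$, $\lambda\in C^{r-1}$; but the $a$-equation is curvature-free, and feeding $\lambda\in C^{r-1}$ (and $\Gamma\in C^{r-1}$, $g\in C^r$) into it lifts $a$ to $C^r$. The reversal formulas then give $\bar g_{ij}\in C^r$, that is $\bar V_n\in C^r$.

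The step I expect to be the main obstacle is the prolongation: one has to verify that after differentiating the first fundamental equation twice, the Ricci and Bianchi identities of $g$ really do close the system on the triple $(a_{ij},\lambda_i,\nu)$, with no extra unknown and no surviving term in the second covariant derivative of the curvature. Granting that, the rest is a routine regularity bootstrap, whose one structural ingredient is that the first Sinyukov equation carries no curvature coefficients — precisely what lets $a$, and hence $\bar g$, reach $C^r$ rather than stopping at $C^{r-1}$.
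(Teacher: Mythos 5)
Your proposal is correct and follows essentially the same route as the proof the paper relies on for this statement (that of Hinterleitner--Mike\v s \cite{himi12}, whose machinery is recalled in Section 3): pass to Sinyukov's tensor $a_{ij}$, prolong $\nabla_k a_{ij}=\lambda_i g_{jk}+\lambda_j g_{ik}$ to the closed linear Cauchy-type system in $(a_{ij},\lambda_i,\mu)$, and bootstrap, the curvature-free first equation supplying the last step from $C^{r-1}$ to $C^r$ before reversing the substitution to recover $\bar g$. The obstacle you flag --- legitimating the prolongation when initially only $a\in C^2$, $\lambda_i\in C^1$ --- is genuine but is precisely what the hypothesis $\bar V_n\in C^2$ is there for, and it is settled by the same kind of trace/commutation-of-derivatives argument that the present paper carries out in detail for the harder $\bar V_n\in C^1$ case.
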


Here and later $V_n=(M,g)\in C^r$ denotes that  $g\in C^r$, i.e.~in a coordinate neighborhood $(U,x)$  for the components of the metric $g$ holds $g_{ij}(x)\in C^r$.\linebreak
If~$V_n\in C^r$ then $M\in C^{r+1}$. This means that the atlas on the manifold $M$ has the differentiability  class $C^{r+1}$, i.e. for non disjoint charts $(U,x)$ and $(U',x')$ on $U\cap U'$ it is true that the transformation $x'=x'(x)\in C^{r+1}$.

We suppose that the differentiability class $r$ is equal to $0,1,2,\dots,\infty,\omega$, where $0,\infty$ and $\omega$  denotes continuous, infinitely differentiable, and real analytic functions respectively.

In the paper we proof more general results.
The following theorem holds:
\begin{theorem}\label{th2}
If the (pseudo-) Riemannian manifold $V_n$ $(V_n\in C^r$, $r\geq1$, $n\geq2)$ admits a geodesic mapping onto $\bar V_n\in C^1$, then $\bar V_n$ belongs to $C^r$ .
\end{theorem}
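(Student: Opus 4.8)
The plan is the following. For $r=1$ there is nothing to prove, so assume $r\ge 2$; by Theorem~\ref{th1} it then suffices to show that $\bar V_n\in C^2$, because afterwards the pair $(V_n,\bar V_n)$ satisfies the hypotheses of that theorem and we obtain $\bar V_n\in C^r$. I would work in a coordinate neighbourhood common to $V_n$ and $\bar V_n$ (so that $f$ is the identity), where the geodesic mapping is described by the Levi--Civita equations
\begin{equation*}
\bar\Gamma^{h}_{ij}=\Gamma^{h}_{ij}+\delta^{h}_{i}\,\psi_{j}+\delta^{h}_{j}\,\psi_{i},
\qquad
\psi_{i}=\partial_{i}\psi,
\quad
\psi=\tfrac{1}{2(n+1)}\ln\Bigl|\tfrac{\det\bar g}{\det g}\Bigr|,
\end{equation*}
and pass to Sinyukov's equivalent \emph{linear} form: introduce the symmetric nondegenerate tensor $a_{ij}=e^{2\psi}g_{i\alpha}g_{j\beta}\,\bar g^{\alpha\beta}$ and the covector $\lambda_{i}=\tfrac12\,\partial_{i}\!\bigl(g^{jk}a_{jk}\bigr)$, for which
\begin{equation*}
\nabla_{k}a_{ij}=\lambda_{i}\,g_{jk}+\lambda_{j}\,g_{ik},
\end{equation*}
$\nabla$ being the Levi--Civita connection of $g$. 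A short determinant computation gives $e^{2\psi}=|\det g/\det a|$, hence $\bar g^{\alpha\beta}=|\det a/\det g|\,g^{\alpha i}g^{\beta j}a_{ij}$, so $\bar g$ is a rational function of $g$ and $a$ wherever these are nondegenerate; consequently it is enough to prove $a\in C^2$.

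From $g\in C^r$ with $r\ge2$ and $\bar g\in C^1$ one reads off $\Gamma\in C^{r-1}$, $R\in C^{r-2}$ and $e^{2\psi}\in C^1$, hence $a\in C^1$ and (since $g^{jk}a_{jk}\in C^1$) $\lambda\in C^0$. Differentiating Sinyukov's equation, using the Ricci identity and contracting, I would obtain the companion relation
\begin{equation*}
\nabla_{j}\lambda_{i}=\mu\,g_{ij}+\Phi_{ij}(a),
\qquad
\mu=\tfrac1n\,g^{ij}\nabla_{j}\lambda_{i},
\end{equation*}
where $\Phi_{ij}(a)$ is linear in $a$ with coefficients assembled algebraically from the curvature tensor $R$ of $g$. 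Written out in coordinates, these two identities express $\partial_{k}a_{ij}$ through $(a,\lambda,g,\Gamma)$ and $\partial_{j}\lambda_{i}$ through $(\lambda,\mu,a,g,\Gamma,R)$. Hence: if $\lambda\in C^1$ then $\partial a\in C^1$, i.e.\ $a\in C^2$; and if $\mu\in C^0$ then, every remaining ingredient being already continuous, $\partial\lambda\in C^0$, i.e.\ $\lambda\in C^1$. The whole question is thereby reduced to the single fact $\mu\in C^0$.

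To obtain $\mu\in C^0$ I would differentiate the companion relation once more and use the Ricci and the second Bianchi identities to derive the third equation of Sinyukov's chain, $\partial_{i}\mu=\Psi_{i}(a,\lambda)$, whose coefficients involve $R$ and $\nabla R$. For $r\ge3$ this right-hand side is already continuous, so $\mu\in C^1\subset C^0$. For $r=2$, where $\nabla R$ is only a distribution, I would note that the curvature derivatives enter $\Psi_{i}$ only through combinations of the form $(\nabla_{m}R)\!\cdot\!a$, and that $(\nabla_{m}R)\!\cdot\!a=\partial_{m}(R\!\cdot\!a)-R\!\cdot\!\partial_{m}a-(\text{terms with }\Gamma)$, i.e.\ the distributional derivative of a continuous tensor plus continuous tensors; thus $\partial_{i}\mu=\partial_{i}(\text{continuous})+(\text{continuous})$, so $\mu\in C^0$. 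This proves $\bar V_n\in C^2$, and Theorem~\ref{th1} upgrades it to $\bar V_n\in C^r$. Finally, $r=\infty$ follows because the finite statement then holds for each $k$, and $r=\omega$ follows from analytic regularity of the closed first-order system for $(a,\lambda,\mu)$ once $C^\infty$ smoothness is in hand.

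I expect the delicate part to be exactly this low-regularity bookkeeping. Sinyukov's chain of identities apparently loses derivatives through $R$ and $\nabla R$, so the passage $C^1\to C^2$ --- which is precisely the content genuinely beyond Theorem~\ref{th1} --- cannot be read off the system in a purely formal way and has to be justified by the integration-by-parts rearrangement above (or, equivalently, by isolating the trace-free parts of the identities, which do not see the offending terms). A minor but necessary preliminary is the reduction to a common coordinate neighbourhood in which $\bar g$ truly has class $C^1$, that is, that the diffeomorphism $f$ itself does not lower the differentiability; this is handled as in the proof of Theorem~\ref{th1}.
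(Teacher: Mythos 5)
Your overall strategy -- reduce to Theorem~\ref{th1} by proving the single step ``$V_n\in C^2$, $\bar V_n\in C^1$ $\Rightarrow$ $\bar V_n\in C^2$'', pass to Sinyukov's linear form, observe that $\bar g$ is algebraic in $(g,a)$ so that it suffices to get $a\in C^2$, and note that $a\in C^2$ would follow from $\lambda\in C^1$ via the first Sinyukov equation -- coincides with the paper up to and including the reduction to the regularity of $\lambda$. The divergence, and the gap, is in how you propose to get $\lambda\in C^1$.

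You want to derive the second and third equations of Sinyukov's closed system ($\nabla_j\lambda_i=\mu g_{ij}+\Phi_{ij}(a)$ and $\partial_i\mu=\Psi_i(a,\lambda)$) and read the regularity off them. But these equations are obtained by differentiating the first equation and applying the Ricci identity, i.e.\ their classical derivation presupposes $\nabla\nabla a$ (and then $\nabla\nabla\lambda$) -- exactly the derivatives whose existence is at stake. This is circular for every $r$, not only for $r=2$; the classical Sinyukov system is established under $\bar g\in C^3$. The only escape is to read the whole chain distributionally, and there your final inference fails as stated: after contracting the integrability condition for $\lambda$, the curvature-derivative terms enter $\partial_m\mu$ through a \emph{divergence} $g^{il}\nabla_l C_{im}$ of a merely continuous tensor $C_{im}=(R\cdot a)_{im}$, not through a gradient $\partial_m(\hbox{continuous scalar})$. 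From $(n-1)\,\partial_m\mu=\partial_l D^l{}_m+E_m$ with $D,E$ continuous one cannot conclude that $\mu$ is continuous (that conclusion would require the right-hand side to be the gradient of a continuous function, so that $\mu$ minus that function has vanishing distributional gradient). So the crucial step $\mu\in C^0$, hence $\lambda\in C^1$, is not established.

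The paper avoids the integrability conditions altogether and never touches $R$ or $\nabla R$. It works with the contravariant form $\nabla_k a^{ij}=\lambda^i\delta^j_k+\lambda^j\delta^i_k$, eliminates $\lambda^i$ between the components $(i{=}j{=}k)$ and $(i\neq j{=}k)$ to obtain
$\tfrac12\,\partial_i a^{ii}-\partial_k a^{ik}=a^{k\a}\G^i_{\a k}+a^{i\a}\G^k_{\a k}-a^{i\a}\G^i_{\a i}$,
whose right-hand side is manifestly $C^1$; this yields the existence of all mixed second derivatives of the $a^{ij}$ directly from the first-order system. The one remaining derivative ($\partial_{22}a^{22}$ in suitable indices) is then produced by passing to a semigeodesic coordinate system, integrating the equation for $\partial_1 a^{12}$ along $x^1$, and differentiating the resulting integral identity with respect to $x^2$; nondegeneracy of the coefficient in front of $\partial_2 a^{22}$ does the rest. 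If you want to salvage your route you would have to replace the $\mu$-step by an argument of this elementary elimination-and-integration type (or by a genuine regularity theorem for the distributional system), since the trace/divergence structure of the third Sinyukov equation does not by itself give continuity of $\mu$.
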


Briefly, this means that \textit{the geodesic mapping preserves the class of smoothness of the metric}.
\begin{remark}
It's easy to proof that the Theorems \ref{th1} and \ref{th2} are valid also for $r=\infty$ and for $r=\omega$. This follows from the theory of solvability of differential equations.
Of course we can apply this theorem only  locally, because differentiability is a local property.
\end{remark}

\begin{remark}
To require $V_n,\,\bar V_n\in C^1$ is a minimal requirement for geodesic mappings.
\end{remark}

T.~Levi-Civita \cite{le} found metrics  ({\it Levi-Civita metrics}) which admit geodesic mappings, see \cite{ami1,ei1}, \cite[p.~173]{mvh}, \cite[p.~325]{pe}. From these metrics we can easily see examples of non trivial geodesic mappings $V_n\to \bar V_n$, where
\begin{itemize}
\item $V_n, \bar V_n\in C^r$ and $\not\in C^{r+1}$ for $r\in \mathbb N$;
\item $V_n, \bar V_n\in C^\infty$ and $\not\in C^\omega$;
\item $V_n, \bar V_n\in C^\omega$.
\end{itemize}
\medskip

\section{Geodesic mappings of Einstein manifolds}

 These results may be applied for geodesic mappings of Einstein manifolds $V_n$ onto pseudo-Riemannian manifolds  $\bar V_n\in C^1$.

Geodesic mappings of Einstein spaces have been studied by many authors
starting by A.Z.~Petrov (see \cite{pe}).  Einstein spaces $V_n$ are
characterized by the condition $Ric={\rm const}\cdot g.$

 An Einstein space $V_3$ is a space of constant curvature.
It is known that Riemannian spaces of constant curvature form a closed class with respect to geodesic mappings (Beltrami theorem \cite{ei1,mkv,mvh,pe,rmg,si}).
 In 1978 (see \cite{miei} and PhD. thesis \cite{mid}, and see \cite{mi,miki03,mihiki06}, \cite[p.~125]{mkv}, \cite[p.~188]{mvh}) Mike\v s
proved that under the conditions $V_n,\bar V_n\in C^3$ the following theorem holds (locally):
\begin{theorem}\label{th3}
If  the Einstein space $V_n$  admits a non trivial geodesic mapping onto a (pseudo-) Riemannian manifold $\bar V_n$, then $\bar V_n$ is an Einstein space.
\end{theorem}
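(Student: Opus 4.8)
The plan is to translate the hypothesis into the Levi-Civita equations, read off from them the transformation law of the Ricci tensor, reduce the conclusion to a pointwise identity for the deformation function, and obtain that identity from the integrability of the Levi-Civita equations together with the Einstein condition on $V_n$.

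\emph{Reduction.} Work in a chart common to $V_n$ and $\bar V_n$ (by Theorem~\ref{th2} the needed differentiability is available after raising $\bar V_n$ to the class of $V_n$). The mapping is geodesic iff $\bar\Gamma^h_{ij}=\Gamma^h_{ij}+\delta^h_i\psi_j+\delta^h_j\psi_i$ for the gradient $\psi_i=\partial_i\psi$, with $\psi=\frac1{2(n+1)}\ln\bigl|\det\bar g/\det g\bigr|$. Comparing Riemann tensors gives $\bar R^h{}_{ijk}=R^h{}_{ijk}+\delta^h_k\psi_{ij}-\delta^h_j\psi_{ik}$, where $\psi_{ij}:=\psi_{i,j}-\psi_i\psi_j$ is symmetric (covariant derivative in $V_n$), hence after contraction $\bar R_{ij}=R_{ij}-(n-1)\psi_{ij}$. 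Since $V_n$ is Einstein, $R_{ij}=\frac Rn g_{ij}$ with $R=\mathrm{const}$, so it suffices to prove that
$$\varphi_{ij}:=\psi_{ij}-\frac{R}{n(n-1)}\,g_{ij}$$
is proportional to $\bar g_{ij}$: writing $\varphi_{ij}=\theta\,\bar g_{ij}$ gives $\bar R_{ij}=-(n-1)\theta\,\bar g_{ij}$, and for $n\ge3$ the contracted Bianchi identity on $\bar V_n$ forces $\theta$ to be constant. (For $n=2,3$, ``Einstein'' means constant curvature and the assertion is Beltrami's theorem.)

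\emph{Integrability.} Rewritten through the connection of $V_n$, the relations $\bar\nabla\bar g=0$ read $\nabla_k\bar g_{ij}=2\psi_k\bar g_{ij}+\psi_i\bar g_{jk}+\psi_j\bar g_{ik}$ (equivalently, Sinyukov's linear form of the equations for the pair $\bar g,\psi$). Differentiating and applying the Ricci identity to the tensor $\bar g$, the parts symmetric in the two differentiation indices cancel — in particular those carrying $\psi_{ij}$ itself — and, substituting the Einstein decomposition $R^h{}_{ijk}=W^h{}_{ijk}+\frac{R}{n(n-1)}\bigl(\delta^h_j g_{ik}-\delta^h_k g_{ij}\bigr)$ with $W$ the (projective) Weyl tensor of $V_n$, one is left with
$$\varphi_{im}\bar g_{jk}+\varphi_{jm}\bar g_{ik}-\varphi_{ik}\bar g_{jm}-\varphi_{jk}\bar g_{im}=-\bigl(W^l{}_{imk}\bar g_{lj}+W^l{}_{jmk}\bar g_{il}\bigr).$$
The key step — and what I expect to be the main obstacle — is to show that the right-hand side vanishes. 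For an Einstein space $W_{hijk}=g_{hl}W^l{}_{ijk}$ is the conformal Weyl tensor, with all curvature symmetries and totally $g$-trace-free; the assertion is that its $\bar g$-contraction symmetrised in $i,j$ is zero, equivalently $(\bar g^{jk}-g^{jk})W^l{}_{jmk}=0$. This is a rigidity forced by the Einstein condition: one differentiates the displayed relation once more, commutes derivatives again, and uses the second Bianchi identity together with $R_{ij}=\frac Rn g_{ij}$, after which the Weyl contractions are compelled to vanish (when $V_n$ has constant curvature $W\equiv0$ and this step is empty). Granting it, contract the displayed identity with $\bar g^{jk}$; using $\bar g^{jk}\bar g_{ik}=\delta^j_i$ and $\bar g^{jk}\bar g_{jm}=\delta^k_m$ one gets $n\varphi_{im}=(\bar g^{kl}\varphi_{kl})\,\bar g_{im}$, hence $\varphi_{ij}=\theta\,\bar g_{ij}$ with $\theta=\frac1n\bar g^{kl}\varphi_{kl}$, and by the reduction $\bar V_n$ is Einstein. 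For the $C^1$-version of the abstract ($V_n\in C^\infty$, $\bar V_n\in C^1$) one first invokes Theorem~\ref{th2} to bring $\bar V_n$ into the class of $V_n$, which makes the curvature computations above legitimate.
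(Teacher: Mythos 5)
First, a point of reference: the paper itself does not prove Theorem~\ref{th3} --- it is quoted from Mike\v s's earlier work \cite{miei,mid} under the hypothesis $V_n,\bar V_n\in C^3$ stated just before it (the paper's own contribution is the regularity upgrade, Theorems~\ref{th2} and~\ref{th4}). Judged against those cited proofs, your reduction is sound and is indeed the standard setup: the Levi-Civita equations, the deformation law for the Ricci tensor via $\psi_{ij}=\nabla_j\psi_i-\psi_i\psi_j$, the integrability conditions of $\nabla_k\bar g_{ij}=2\psi_k\bar g_{ij}+\psi_i\bar g_{jk}+\psi_j\bar g_{ik}$ obtained from the Ricci identity, the Schur argument making $\theta$ constant for $n\ge3$, and the observation that everything reduces, after contracting with $\bar g^{jk}$, to the vanishing of the $\bar g$-trace of the Weyl tensor of $V_n$.

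The genuine gap is exactly the step you yourself flag as ``the main obstacle''. Writing that ``one differentiates the displayed relation once more, commutes derivatives again, and uses the second Bianchi identity \dots after which the Weyl contractions are compelled to vanish'' is not a proof: once the right-hand side is known to vanish, the conclusion is a two-line contraction, so that vanishing \emph{is} the theorem. Worse, the rigidity you assert is false as stated, because your sketch never uses the nontriviality hypothesis $\psi\not\equiv0$, and without it the claim fails: take an affine (trivial geodesic) mapping of a reducible Einstein space, e.g.\ $S^2\times S^2$ with $\bar g=g_1\oplus c\,g_2$, $c\neq1$. There $\psi\equiv0$, your identity reduces to $-\frac{R}{n(n-1)}\bigl(g_{im}\bar g_{jk}+g_{jm}\bar g_{ik}-g_{ik}\bar g_{jm}-g_{jk}\bar g_{im}\bigr)$ equalling the Weyl contraction, the left-hand side is nonzero, so the $\bar g$-trace of $W$ does \emph{not} vanish --- and indeed $\bar V_n$ is not Einstein. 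Hence no amount of formal differentiation that does not invoke $\psi\neq0$ can establish the step. In the actual proofs one must build the full prolonged (closed) Sinyukov system --- the equation for $\nabla_k\lambda_i$ extracted from the $g^{jk}$-trace of the integrability condition, then the equation for the resulting scalar, substitute back, and use the second Bianchi identity together with $R_{ij}=\frac Rn g_{ij}$ and the hypothesis $\lambda_i\not\equiv0$ on a dense set to force $a^{\alpha\beta}W^h{}_{\alpha k\beta}=0$. That multi-step argument is the mathematical content of the theorem, and it is missing from your proposal.
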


Many properties of Einstein spaces appear when $V_n\in C^3$ and $n>3$.
Moreover, it is known (D.M.\,DeTurck and J.L.\,Kazdan \cite{detu}, see \cite[p.~145]{bess}), that Einstein space \vn\   belongs to $C^\omega$, i.e. for all points of \vn\ a local coordinate system~$x$  exists for which $g_{ij}(x)\in C^\omega$ ({\it analytic coordinate system}).

It implies global validity of  Theorem \ref{th3}, and  on basis of Theorem~\ref{th2} the following more general theorem holds:
\begin{theorem}\label{th4}
If  the Einstein space $V_n$ admits a nontrivial geodesic mapping onto a (pseudo-) Riemannian manifold $\bar V_n\in C^1$, then $\bar V_n$ is an Einstein space.
\end{theorem}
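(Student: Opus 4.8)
The approach is a regularity bootstrap that brings us into the range of validity of the classical Theorem~\ref{th3}, followed by a short globalization argument. The only real obstruction to applying Theorem~\ref{th3} straight away is that $\bar V_n$ is assumed merely of class $C^1$, which is far too weak for the curvature computations underlying that theorem; the plan is to remove this obstruction at no cost, using the two ingredients already at hand: the smoothness-preservation Theorem~\ref{th2} and the analyticity of Einstein metrics.

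First I would recall that an Einstein space is automatically real analytic in suitable coordinates: by the theorem of DeTurck and Kazdan \cite{detu} (see also \cite[p.~145]{bess}), about each point of $V_n$ there is a chart in which $g_{ij}\in C^\omega$, so that $V_n\in C^\omega$, and in particular $V_n\in C^3$, irrespective of the standing hypothesis $V_n\in C^1$. Then I would apply Theorem~\ref{th2} to the given geodesic mapping $f: V_n\to\bar V_n$, in the analytic version sanctioned by the Remark after Theorem~\ref{th2}: since $V_n\in C^\omega$ and $V_n$ admits a geodesic mapping onto $\bar V_n\in C^1$, we obtain $\bar V_n\in C^\omega$, and in particular $\bar V_n\in C^3$. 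At this point both $V_n$ and $\bar V_n$ have class $C^3$, so all the hypotheses of Theorem~\ref{th3} are met.

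Finally I would invoke Theorem~\ref{th3}, which holds locally for manifolds of class $C^3$. Since $f$ is nontrivial, there is a non-empty open set $U\subseteq M$ on which $f$ is not affine; put $\bar U=f(U)$. On $U$ the manifold $V_n$ is still Einstein and $f|_U$ is a nontrivial geodesic mapping onto $\bar V_n|_{\bar U}\in C^3$, so Theorem~\ref{th3} yields $\overline{Ric}=c\,\bar g$ on $\bar U$ for some constant $c$. To pass from $\bar U$ to the whole of $\bar M$ I would use the analyticity established above: the tensor field $\overline{Ric}-c\,\bar g$ is analytic and vanishes on the non-empty open set $\bar U$, hence, by the identity theorem for analytic functions on the connected manifold $\bar M$, it vanishes identically. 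Therefore $\bar V_n$ is an Einstein space, which is the assertion. The only delicate step in this scheme is the regularity bridge provided by the first two steps; once $\bar V_n$ is known to be of class $C^3$ (in fact $C^\omega$), the remainder is a direct appeal to Theorem~\ref{th3} together with a connectedness argument.
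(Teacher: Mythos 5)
Your proposal is correct and follows essentially the same route as the paper: upgrade $V_n$ to $C^\omega$ via DeTurck--Kazdan, use Theorem~\ref{th2} (in its $r=\omega$ form) to get $\bar V_n\in C^\omega$, apply the local Theorem~\ref{th3}, and globalize by analyticity. The paper phrases the last step via $\psi=\nabla\Psi$ being analytic and hence vanishing only on a null set, while you use the identity theorem on $\overline{Ric}-c\,\bar g$; these are the same idea.
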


The present Theorem is true globally, because the function $\Psi$ which determines the geodesic mapping is real analytic on an analytic coordinate system  and so $\psi\ (=\nabla\Psi)$  is vanishing only on a set points of zero measure. This simplifies the proof given in \cite{himi12}.

Finally, basing on the results (see \cite{mi,miki82,miki03,mihiki06}, \cite[p.~128]{mkv}, \cite[p.~194]{mvh}) for geodesic mappings of a four-dimensional Einstein manifolds  the following theorem holds:
\begin{theorem}\label{th5}
If  a four-dimensional Einstein space $V_4$ with non constant curvature globally admits a geodesic mapping onto a (pseudo-) Riemannian manifold $\bar V_4\in C^1$, then the mapping is affine and, moreover, if the scalar curvature is non vanishing, then the mapping is homothetic, i.e. $\bar g={\rm const}\cdot g$.
\end{theorem}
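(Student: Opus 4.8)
The plan is to raise the differentiability of $\bar V_4$ until the classical theory becomes applicable, and then to turn the (local) classical conclusion into a global one by exploiting analyticity. First, by the theorem of DeTurck and Kazdan (\cite{detu}, see \cite[p.~145]{bess}) an Einstein space carries an analytic atlas, so $V_4\in C^\omega$, and in particular $V_4\in C^r$ for every $r\ge 1$. Since $V_4$ admits a geodesic mapping onto $\bar V_4\in C^1$, Theorem~\ref{th2} — together with the first Remark, which extends it to $r=\infty,\omega$ — gives $\bar V_4\in C^\omega$; then Theorem~\ref{th4} shows that $\bar V_4$ is itself an Einstein space. Consequently both metrics may be treated as smooth (in fact analytic), and the known local results on geodesic mappings of four-dimensional Einstein manifolds (Mike\v s \cite{mi,miki82,miki03,mihiki06}, \cite[p.~128]{mkv}, \cite[p.~194]{mvh}) become available.

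Second, I would invoke that local theory. Its content is that near any point at which $V_4$ is not of constant curvature the covector $\psi\ (=\nabla\Psi)$ defining the geodesic mapping must vanish, i.e.\ the mapping is affine in that neighbourhood; this is where the dimension $n=4$ and the algebraic (Petrov) type of the Weyl tensor of an Einstein $V_4$ enter decisively, and this is the main substantive input, supplied by the cited works. To globalise: $V_4$ is connected and analytic and, by assumption, is not a space of constant curvature, so \emph{no} nonempty open subset of $M$ has constant curvature (such an open set would, by analyticity and connectedness, force $V_4$ to have constant curvature everywhere). Hence the local statement applies near every point, so $\psi\equiv 0$ on $M$: the geodesic mapping is affine, and $V_4$ and $\bar V_4$ carry the same Levi-Civita connection $\nabla$.

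Finally, suppose in addition that the scalar curvature $R$ of $V_4$ is non-vanishing; by Schur's theorem $R$ is a constant. An affine mapping preserves the Ricci tensor, so in common coordinates $\bar R_{ij}=R_{ij}$, and using that $V_4$ and $\bar V_4$ are both Einstein one obtains $\tfrac{R}{4}\,g_{ij}=R_{ij}=\bar R_{ij}=\tfrac{\bar R}{4}\,\bar g_{ij}$. Since $R\ne0$ this forces $\bar R\ne0$ and $\bar g_{ij}=(R/\bar R)\,g_{ij}$; as $R$ and $\bar R$ are nonzero constants, $\bar g={\rm const}\cdot g$, so the mapping is homothetic (consistently with $\nabla\bar g=0$). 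The one point I expect to need care is precisely the globalisation step: one must be sure that nothing anomalous hides on the locus where $V_4$ could fail to be ``generically curved'' — a locus which, thanks to analyticity of Einstein metrics and connectedness of $M$, is in fact empty. The remainder is the cited local theory together with routine bookkeeping.
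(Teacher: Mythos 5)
Your overall strategy coincides with the paper's: the paper gives no independent proof of Theorem~\ref{th5}, but obtains it by combining the DeTurck--Kazdan analyticity of Einstein metrics, the smoothness bootstrap of Theorem~\ref{th2} (extended to $r=\omega$ by the first Remark), Theorem~\ref{th4}, and the cited local results of Mike\v s and Kiosak on geodesic mappings of four-dimensional Einstein spaces. Your first part is sound as a conditional argument: assume the mapping nontrivial, conclude via Theorem~\ref{th4} that $\bar V_4$ is Einstein and analytic, invoke the cited local theory to get $\psi=0$ near every point where the curvature is non-constant, and globalise by observing that analyticity plus connectedness makes the constant-curvature locus nowhere dense; this parallels the analyticity/measure-zero remark the paper makes after Theorem~\ref{th4}.

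The gap is in the ``moreover'' step. You take the Einstein property of $\bar V_4$ from Theorem~\ref{th4}, whose hypothesis is that the mapping is \emph{nontrivial}; but by the time you use it you have already concluded that the mapping is trivial (affine), so Theorem~\ref{th4} no longer applies and the identity $\bar R_{ij}=\frac{\bar R}{4}\,\bar g_{ij}$ is unjustified. For an affine mapping all that is available is $\bar\nabla=\nabla$, hence $\nabla\bar g=0$: $\bar g$ is merely a $\nabla$-parallel symmetric tensor, and your Ricci computation only yields $\bar R_{ij}=R_{ij}=\frac{R}{4}\,g_{ij}$, which constrains $\bar g$ not at all. This is not a removable formality: a parallel metric is proportional to $g$ only under an irreducibility (holonomy) assumption, and a four-dimensional Einstein space with $R\neq0$ and non-constant curvature can be locally reducible --- a product of two surfaces of equal constant curvature, such as $S^2\times S^2$, carries the parallel, non-homothetic metrics $c_1g_1\oplus c_2g_2$ with $c_1\neq c_2$. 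So the homothety conclusion cannot be extracted from ``affine $+$ $V_4$ Einstein $+$ $R\neq0$'' by the Ricci argument alone; it requires the separate and stronger statements about affine mappings of Einstein $V_4$ contained in the cited sources (\cite{miki82,miki03}, \cite[p.~194]{mvh}), which must in particular dispose of (or exclude by hypothesis) the locally reducible case. You should either quote that result explicitly or supply the holonomy analysis that your argument currently skips.
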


\section{Geodesic mapping theory for $V_n\to\bar V_n$ of class $C^1$}

We briefly remind some main facts of geodesic mapping theory of (pseudo-) Riemannian manifolds which were found by T.~Levi-Civita \cite{le}, L.P.~Eisenhart \cite{ei1,ei2} and N.S.~Sinyukov \cite{si}, see \cite{ami1,himi10,himi11,himi12,mid,mi,mibe89,mihi10,mkv,mvh,no,pe,pr,
rmg,shir,si,sol,th,vra,we}.
In these results  no details about the smoothness class of the metric were stressed. They were formulated ``for sufficiently smooth'' geometric objects.

Since a geodesic mapping $f$: $V_n\to\bar V_n$ is a diffeomorphism, we can suppose  $\bar M=M$.
A (pseudo-) Riemannian manifold $V_n=(M,g)$ admits a geodesic mapping onto $\bar V_n=(M,\bar g)$ if and only if the \textit{Levi-Civita equations} \begin{equation}\label{eq1}
\bar\nabla_XY=\nabla_XY+\psi(X)Y+\psi(Y)X
\end{equation}
hold for any tangent fields $X,Y$ and where $\psi$ is a differential form on $M$.
Here $\nabla$ and $\bar \nabla$ are Levi-Civita connections of $g$ and $\bar g$, respectively. If $\psi\equiv 0$ then $f$ is {\it affine} or {\it  trivially geodesic}.

Let $(U,x)$ be a chart from the atlas on $M$. Then equation \eqref{eq1} on $U$ has the following
 local form:
$%\begin{equation}\label{cle}
\bar\Gamma^h_{ij}=\Gamma_{ij}^h+\psi_i\delta^h_j+\psi_j\delta_i^h,
$%\end{equation}
\ where $\Gamma^h_{ij}$ and $\bar\Gamma^h_{ij}$ are the Christoffel symbols of $V_n$ and $\bar V_n$, $\psi_i$~are components of $\psi$ and $\delta^h_i$ is the Kronecker delta.
Equations \eqref{eq1} are equivalent to the following Levi-Civita equations
  \begin{equation}\label{eq2}
 \nabla_k\bar g_{ij}=2\psi_k \bar g_{ij}+\psi_i\bar g_{jk}+\psi\bar g_{ik}
  \end{equation}
where    $\bar g_{ij}$ are components of $\bar g$.

It is known that  \
$$\ds\psi_i=\partial_i\Psi,\quad
\ds\Psi=\frac{1}{2(n+1)}\ln\left|\frac{\det\bar g}{\det g}\right|,
%$ Here and after we denote $
\quad \partial_i=\ds\frac{\partial\ }{\partial x^i}.
$$%.\\[1mm]

N.S.~Sinyukov  proved that the Levi-Civita equations \eqref{eq1} and \eqref{eq2}  are
equivalent to (\cite[p.~121]{si}, \cite{mi}, \cite[p.~108]{mkv}, \cite[p.~167]{mvh}, \cite[p.~63]{rmg}):
  \begin{equation}\label{eq3}
  \nabla_k a_{ij}=\lambda_i g_{jk}+\lambda_j g_{ik},
  \end{equation}
  where \\[-6mm]
\begin{equation}\label{eq4}
\hbox{(a) \ } a_{ij}=\e^{2\Psi}\bar g^{\alpha\beta}g_{\alpha i}g_{\beta j};\quad
\hbox{(b) \ } \lambda_i=-\e^{2\Psi}\bar g^{\alpha\beta}g_{\beta i}\psi_{\alpha}.
\end{equation}
 From \eqref{eq3} follows
 $\lambda_i=\partial_i(\frac12\, a_{\a\b}g^{\a\b})$,
 $(g^{ij})=(g_{ij})^{-1}$ and  $(\bar g^{ij})=(\bar g_{ij})^{-1}$.

 On  the other hand \cite[p.~63]{rmg}:
\begin{equation}\label{eq5}
\bar g_{ij}=\e^{2\Psi}\hat g_{ij},\quad
  \ds\Psi=\frac12\ln\left|\frac{\det\hat g}{\det g}\right|,
  \quad
  (\hat g_{ij})=(g^{i\a}g^{j\b}a_{\a\b})^{-1}.
\end{equation}

Equations \eqref{eq3} and  \eqref{eq4} we can rewrite in the following equivalent form (see \cite{mibe89}, \cite[p.~150]{mvh}):
\begin{equation}\label{eq6}
\nabla_k a^{ij}=\lambda^i\delta^j_k+\lambda^j\delta^i_k,
\end{equation}
where
\begin{equation}\label{eq7}
\hbox{(a) \ } a^{ij}=\e^{2\Psi}\bar g^{ij}\quad \mbox{and}\quad
\hbox{(b) \ } \lambda^i=-\psi_{\alpha}a^{\alpha i}.
\end{equation}
Evidently, it  follows
\begin{equation}\label{eq8}
\lambda^i=\frac 12\ g^{ik}\,\partial_k(a^{\a\b}g_{\a\b}).
\end{equation}

The above formulas \eqref{eq1}, \eqref{eq2}, \eqref{eq3},  \eqref{eq6}, are the criterion for geodesic mappings $V_n\to
\bar V_n$ globally as well as locally.
These formulas are true only under the condition
 \vn, \vnn\ $\in C^1$.

\section{Geodesic mapping theory for $V_n\in C^2\to\bar V_n\in C^1$ }

In this section, we prove the above main Theorem \ref{th2}.
It is easy to see that  Theorem \ref{th2} follows from  Theorem \ref{th1} and the following theorem.
\begin{theorem}\label{thm:3}
If $V_n\!\in\! C^2$ admits a geodesic mapping onto $\bar V_n\in C^1$, then \hbox{$\bar V_n\in C^2$}.
\end{theorem}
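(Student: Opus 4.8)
The plan is to exploit the tensor equation \eqref{eq6} together with the formula \eqref{eq8} for $\lambda^i$ as a bootstrapping device for the regularity of $\bar g$. Start from the hypotheses: $g_{ij}\in C^2$, so the Christoffel symbols $\Gamma^h_{ij}\in C^1$, and $\bar g_{ij}\in C^1$, so by \eqref{eq7}(a) the tensor $a^{ij}=\e^{2\Psi}\bar g^{ij}$ is a priori only $C^1$ (note $\Psi\in C^1$ since $\det\bar g\in C^1$ and $\det g\in C^2$). The key observation is that \eqref{eq6} expresses $\nabla_k a^{ij}$, hence $\partial_k a^{ij}$, in terms of $a^{ij}$, $\Gamma^h_{ij}$, and $\lambda^i$. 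If I can show $\lambda^i\in C^1$, then the right-hand side of \eqref{eq6} is $C^1$, and since the left-hand side is $\partial_k a^{ij}$ plus $C^1$ terms (products of $\Gamma\in C^1$ with $a\in C^1$), I conclude $\partial_k a^{ij}\in C^1$, i.e. $a^{ij}\in C^2$; then $\bar g^{ij}=\e^{-2\Psi}a^{ij}$, and I must separately upgrade $\Psi$ to $C^2$ to conclude $\bar g_{ij}\in C^2$.

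The first substantive step is therefore to establish $\lambda^i\in C^1$. Here I would use \eqref{eq8}: $\lambda^i=\tfrac12 g^{ik}\partial_k(a^{\alpha\beta}g_{\alpha\beta})$. The factor $a^{\alpha\beta}g_{\alpha\beta}$ is a priori $C^1$ (product of $C^1$ and $C^2$), so its derivative is only $C^0$, which is not yet enough — this circularity is the crux. To break it, I would contract \eqref{eq6} to get a relation for the scalar $a:=a^{\alpha\beta}g_{\alpha\beta}$. Differentiating $a$ and using \eqref{eq6} and $\nabla g=0$ gives $\partial_k a = \nabla_k(a^{\alpha\beta}g_{\alpha\beta}) = (\lambda^\alpha\delta^\beta_k+\lambda^\beta\delta^\alpha_k)g_{\alpha\beta} = 2\lambda_k$ where $\lambda_k=g_{k\alpha}\lambda^\alpha$; combined with \eqref{eq8} this is consistent but still only gives $\partial_k a$ in terms of $\lambda$, which is in terms of $\partial a$. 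The way out is to note that $a^{ij}$, being built from the metric tensor, has a geometric meaning and \eqref{eq6} is a genuine first-order linear PDE system for the pair $(a^{ij},\lambda^i)$ — indeed, differentiating \eqref{eq6} once more and using the Ricci identity produces an expression for $\nabla_l\lambda^i$ (equivalently $\partial_l\lambda^i$) in terms of $a$, the curvature tensor of $g$, and $\lambda$ itself. Since $g\in C^2$, its curvature tensor is $C^0$; so this gives $\partial_l\lambda^i$ as a $C^0$ function, i.e. $\lambda^i\in C^1$ — provided the curvature-tensor term is admissible, which it is because it multiplies $a\in C^1$. This is exactly the mechanism by which \eqref{eq6} is a ``closed'' Cauchy-type system.

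Granting $\lambda^i\in C^1$, the rest is a clean bootstrap. Rewriting \eqref{eq6} as $\partial_k a^{ij} = \lambda^i\delta^j_k+\lambda^j\delta^i_k - \Gamma^i_{k\alpha}a^{\alpha j}-\Gamma^j_{k\alpha}a^{i\alpha}$, every term on the right is now $C^1$, so $a^{ij}\in C^2$. Then from \eqref{eq8}, $\lambda^i$ is a $C^1$-combination of $g^{ik}\in C^2$ and $\partial_k(a^{\alpha\beta}g_{\alpha\beta})$; since $a^{\alpha\beta}g_{\alpha\beta}\in C^2$ now, actually $\lambda^i\in C^1$ is confirmed and moreover $\partial_k\Psi = \psi_k$; the relation $\lambda_i=-\e^{2\Psi}\bar g^{\alpha\beta}g_{\beta i}\psi_\alpha$ (from \eqref{eq4}(b), equivalently $\lambda^i=-\psi_\alpha a^{\alpha i}$ from \eqref{eq7}(b)) lets me solve for $\psi_i$: inverting, $\psi_i = -a^{-1}_{i\alpha}\lambda^\alpha$ where $a^{-1}$ is the inverse of $a^{ij}$ (nonsingular since $\bar g$ is), and $a^{-1}\in C^2$, $\lambda\in C^1$, so $\psi_i\in C^1$, hence $\Psi\in C^2$. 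Finally $\bar g^{ij}=\e^{-2\Psi}a^{ij}$ with both factors $C^2$ gives $\bar g^{ij}\in C^2$, and inverting, $\bar g_{ij}\in C^2$. The main obstacle, as indicated, is the apparent circularity in upgrading $\lambda^i$; resolving it requires differentiating the Sinyukov system \eqref{eq6} a second time and invoking the Ricci identity to see that the integrability conditions close the system, so that the limited regularity $g\in C^2$ (curvature in $C^0$) suffices to push $\lambda^i$ into $C^1$.
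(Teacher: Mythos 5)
Your overall strategy---bootstrapping the regularity of $\bar g$ through $a^{ij}$ and $\lambda^i$ via the Sinyukov system \eqref{eq6}---is the right one, and your final paragraph (from ``$\lambda^i\in C^1$'' to ``$\bar g_{ij}\in C^2$'') is correct and matches the paper's conclusion. The gap is exactly where you yourself locate the crux: the argument you give for $\lambda^i\in C^1$ is circular. To ``differentiate \eqref{eq6} once more and use the Ricci identity'' you need $a^{ij}$ to possess second covariant derivatives that commute up to curvature, i.e.\ essentially $a^{ij}\in C^2$ --- which is the statement being proved. Under the hypotheses you only know $a^{ij}\in C^1$, so the closed Cauchy-type system (the second and third Sinyukov equations for $\nabla_k\lambda^i$ and for the trace unknown $\mu$) is a \emph{consequence} of the regularity, not a tool for establishing it. Even if you interpret the second differentiation distributionally --- which is legitimate, since distributional partials commute and the products $\Gamma^i_{\alpha k}a^{\alpha j}$ are classical $C^1$ objects --- the alternated identity only yields $n\,\partial_l\lambda^i-\delta^i_l\,\partial_\alpha\lambda^\alpha=(\text{continuous})$. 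That makes the trace-free part of $\partial_l\lambda^i$ continuous, but the trace $\tfrac1n\partial_\alpha\lambda^\alpha$ enters as a new unknown (it drops out of the contracted identity), and controlling it would require differentiating yet again. So the circularity is not resolved by this route.

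The paper closes this gap by a genuinely coordinate-level argument rather than by the integrability conditions. In a semigeodesic chart \eqref{eq9} one reads off from \eqref{eq12} that $\partial_k a^{ij}$ is already $C^1$ whenever $k\notin\{i,j\}$ (the $\lambda$-terms drop out of the right-hand side), and eliminating $\lambda^i$ between the $(i,i,i)$- and $(i,i,k)$-components gives \eqref{eq14}, which supplies the mixed second derivatives $\partial_{kk}a^{ik}$. The only genuinely missing derivatives are the diagonal ones $\partial_{ii}a^{ii}$; these are obtained by integrating the equation for $\partial_1 a^{12}$ along $x^1$ (formula \eqref{eq16}) and differentiating the resulting integral representation with respect to $x^2$, which isolates $\partial_{22}a^{22}$ with a nonvanishing coefficient away from the hypersurface $x^1=0$. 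To salvage your proof you would need either to reproduce an argument of this kind or to find some other way of controlling the trace part of $\partial_l\lambda^i$ using only the $C^1$ data available for $a^{ij}$.
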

\begin{proof}
Below we prove   Theorem \ref{thm:3}.\smallskip

{\bf 3.1}
We will suppose that the (pseudo-) Riemannian manifold $V_n\in C^2$ admits the geodesic mapping onto the (pseudo-) Riemannian manifold $\bar V_n\in C^1$.
Furthermore, we can assume that $\bar M=M$.

We study the coordinate neighborhood $(U,x)$ of any point $p=(0,0,\dots,0)$ at $M$.
Evidently, components $g_{ij}(x)\in C^2$ and $\bar g_{ij}(x)\in C^1$ on $U\subset M$. On~$(U,x)$ formulas \eqref{eq1}--\eqref{eq8} hold.
From that facts it follows that the functions $g^{ij}(x)\in C^2$,
$\bar g^{ij}(x)\in C^1$, $\Psi(x)\in C^1$, $\psi_i(x)\in C^0$, $a^{ij}(x)\in C^1$,  $\lambda^i(x)\in C^0$, and $\G^h_{ij}(x)\in C^1$, where $\G^h_{ij}=\frac12\ g^{hk}(\partial_ig_{jk}+\partial_jg_{ik}-\partial_kg_{ij})$ are  Christoffel symbols.
\smallskip

{\bf 3.2}
It is easy to see that in a neighborhood of the point $p$ in $V_n\ \in C^r$  exists a semigeodesic coordinate system $(U,x)$ for which the metric $g \in C^r$ has the following form (see \cite{ei1}, \cite[p.~64]{mvh})
\begin{equation}\label{eq9}
{\rm d}s^2=e({\rm d}x^1)^2+g_{ab}(x^1,\ldots, x^n){\rm d}x^a{\rm d}x^b,\quad
e=\pm 1,\quad a,b>1.\end{equation}
Evidently, for $a>1$:
\begin{equation}\label{eq9a}
g_{11}=g^{11}=e=\pm 1,\quad g_{1a}=g^{1a}=0 \hbox{ \ and \ } \G^1_{11}=\G^1_{1a}=\G^a_{11}=0 .
\end{equation}
We can construct such a coordinate system  using a coordinate transformation of class $C^{r+1}$ for a basis of  non-isotropic hypersurfaces $\Sigma\in C^{r+1}$ in a neighborhood of $p\in \Sigma$.
Moreover, we can assume at $p$ that
\begin{equation}\label{10}
g_{ij}(0)=e_i\,\d_{ij}; \quad e_i=\pm1.
\end{equation}

{\bf 3.3}
Equations \eqref{eq6} we write in the following form
\begin{equation}\label{eq12}
\partial_ka^{ij}=\lambda^i\d^j_k+\lambda^j\delta^i_k
-a^{i\a}\G^j_{\a k}-a^{j\a}\G^i_{\a k}.
\end{equation}
Because $a^{ij}\in C^1$ and $\G^j_{\a k}\in C^1$ from equation \eqref{eq12} we have the existence of the derivative immediately
$$%\begin{equation}\label{eq11}
\partial_{kl}a^{ii},\
\partial_{kk}a^{ii}, \
\partial_{ki}a^{ii}(\equiv\partial_{ik}a^{ii}),\
\partial_{kl}a^{ij}, \
\partial_{kk}a^{ij}, \
\partial_{ki}a^{ij}(\equiv\partial_{ik}a^{ij}),
$$%\end{equation}
for each set of different indices $i,j,k,l$.
Derivatives do not depend on the order, because they are continuous functions.

We compute formula \eqref{eq12} for $i=j=k$ and for $i\neq j=k$:
$$%\begin{equation}\label{eq13}
\partial_ia^{ii}=2\la^i-2a^{i\a}\G^i_{\a i}
\hbox{\ \ and \ \ }
\partial_ka^{ik}=\la^i-a^{k\a}\G^i_{\a k} -a^{i\a}\G^k_{\a k}
$$%\end{equation}
where for an index $k$ we do not carry out the Einstein summation, and after eliminating $\la^i$ we obtain
\begin{equation}\label{eq14}
\hbox{$\frac12$}\ \partial_ia^{ii}-\,\partial_ka^{ik}=a^{k\a}\G^i_{\a k} +a^{i\a}\G^k_{\a k}-a^{i\a}\G^i_{\a i}
\end{equation}
Because there exists the partial derivative $\partial_{ik}a^{ii}$,
formula \eqref{eq14} implies the existence of the partial derivatives $\partial_{kk}a^{ik}.$\medskip

{\bf 3.4}
In the semigeodesic  coordinate system   \eqref{eq9} we compute \eqref{eq12} for $i=j=k=1$:
$\lambda^1=\frac12\ \partial_1a^{11}$, and from  \eqref{eq8}:
$\lambda^1=\hbox{$\frac12$}\ \partial_1(a^{11}+ea^{\a\b}g_{\a\b})$, we obtain $\partial_1(a^{\a\b}g_{\a\b})=0$. Here and later $\a,\b>1$.

Further \eqref{eq12} for $i=j=1$ and $k=2$ we have the following expression
\
$
\partial_1a^{12}+a^{1\g}\G^2_{\g 1}+a^{2\g}\G^1_{\g 1}=\lambda^2
$.
\
Using \eqref{eq8} we have
$$%\begin{equation}\label{eq15}
\partial_1 a^{12}=\hbox{$\frac12$}\ g^{2\g}\cdot\partial_\g(a^{11}+a^{\a\b} g_{\a\b})-a^{1\g}\G^2_{\g1}, \quad \g>1,
$$%\end{equation}
and after  integration  we obtain \\[1pt]

\centerline{$
a^{12}=\frac 12\left(\int_0^{x^1}g^{2\g}(\tau^1,x^2,\ldots,x^n)d\tau^1\right)\cdot \partial_\g(a^{\a\b}\cdot g_{\a\b})+
$}

\ \\[-15mm]

\begin{equation}\label{eq16}\ \end{equation}
\\[-10mm]

\centerline{$
\frac 12\int_0^{x^1}g^{2\g}(\tau^1,x^2,\ldots,x^n)\cdot \partial_\g a^{11}d\tau^1-
\int_0^{x^1} a^{1\g}\G^2_{\g1}d\tau^1+A(x^2,\ldots,x^n).
$}

\ \\[-4mm]

\noi
As $a^{12}(0,x^2,\ldots,x^n)\equiv A(x^2,\ldots, x^n)$, the function $A\in C^1$.

After differentiating the formula \eqref{eq16} by $x^2$ and using the law of commutation of derivatives and integrals, see \cite[p.~300]{kud}, we can see that
\begin{equation}\label{eq17}
\frac{\partial}{\partial x^2}\left\{\left(\hbox{$\int_0^{x^1}$}g^{2\g}(\tau^1,x^2,\ldots,x^n)d\tau^1\right)\cdot \smash\partial_\g(a^{\a\b}\cdot g_{\a\b})\right\}
\end{equation}
exists.
From \eqref{eq14} for $i=2$ and $k=c\neq2$ we obtain \
$
\partial_c a^{c2}=\frac 12\partial_2 a^{22}+a^{c\d}\G^2_{\d c}+a^{2\d}\G^c_{\d c}-a^{2\d}\G^2_{\d 2}
$.
Using this formula   we can rewrite the bracket \eqref{eq17} in the following form
 $$
 \left\{\left(\hbox{$\int_0^{x^1}$}g^{2\g}(\tau^1,x^2,\ldots,x^n)d\tau^1\right)\cdot g_{2\g}\ \cdot\ \partial_2 a^{22}+f\ \right\},
 $$
where $f$ is a rest of this parenthesis, which is  evidently differentiable by $x^2$.

Because of the parenthesis and also the coefficients by $\partial_2a^{22} $ are differentiable with respect to $x^2$, if
$
\left(\int_0^{x^1}g^{2\g}(\tau^1,x^2,\ldots,x^n)d\tau^1\right)\cdot g_{2\g}\neq 0
$, then $\partial_{22}a^{22}$
must exist.

Using \eqref{eq3} this inequality is true for all $x$ in a neighborhood of the point~$p$ excluding the point for which $x^1=0$.

For these reasons in this domain exists the derivative $\partial_{22}a^{22}$ and also exist all second derivatives $a^{ij}$. This follows from the derivative of the formula \eqref{eq14}.

So $a^{ij}\in C^2$ and $\lambda^i\in C^1$, from the formula (\ref{eq7}b) it follows $\psi_i\in C^1$ and it means that $\Psi \in C^2$. From (\ref{eq7}a)  follows $\bar g^{ij}\in C^2$ and also $\bar g_{ij}\in C^2$.
This is a proof of the Theorem \ref{thm:3}.
\end{proof}

\end{document}